\definecolor{darkbrown}{rgb}{.5,.1,.1} 
\title[Critical Groups of Adinkras]{The Critical Groups of Adinkras Up to $2$-Rank of Cayley Graphs}
\author{Chi Ho Yuen}
\address{Chi Ho Yuen:
Department of Mathematics, University of Oslo, Oslo, Norway
}
\email{chihy@math.uio.no}
\def\A{\mathcal{A}}
\def\Ccal{\mathcal{C}}
\def\K{\mathcal{K}}
\newcommand{\FF}{\mathbb{F}}
\newcommand{\ZZ}{\mathbb{Z}}
\numberwithin{equation}{section}
\theoremstyle{definition}
\newtheorem{theorem}{Theorem}[section]
\newtheorem{definition}[theorem]{Definition}
\newtheorem{corollary}[theorem]{Corollary}
\newtheorem{proposition}[theorem]{Proposition}
\newtheorem{example}[theorem]{Example}
\newcommand{\coker}{\operatorname{coker}}
\tikzset{VertexStyle/.style = {
    circle, fill=white, draw=black,
    text           = black,
    inner sep      = 2pt,
    outer sep      = 0pt,
    minimum size   = 10 pt}}
\tikzset{BoxVertex/.style = {rectangle, fill=white, draw=black, text = black, inner sep = 2.7pt, outer sep = 0pt, minimum size = 10pt}}
\tikzset{EdgeStyle/.style = {color=black!60, thick}}
\begin{document}

\begin{abstract}
Adinkras are graphical gadgets introduced by physicists to study supersymmetry, which can be thought of as the Cayley graphs for supersymmetry algebras.
Improving the result of Iga et al., we determine the critical group of an Adinkra given the $2$-rank of the Laplacian of the underlying Cayley graph.
As a corollary, we show that the critical group is independent of the signature of the Adinkra.
The proof uses the monodromy pairing on these critical groups.
\end{abstract}

\maketitle

\vspace{-5mm}
\section{Introduction}

Computing the cokernel, or equivalently, the Smith Normal Form (SNF), of an integer matrix is a recurrent topic in algebraic combinatorics \cite{StanleyNormal}.
While the definition is elementary, elaborated machinery has been used to approach the task, and the answer to many seemingly simple matrices remain unknown.
A prominent notion within the topic is the {\em critical group} $\K(G)$ of a graph $G$, which is (the torsion part of) the cokernel of the graph Laplacian $L(G)$ \cite{SandpileBook, ChipBook}.
A more specific direction is to compute the critical groups of graphs coming from algebra, where ideas from representation theory, Gr\"{o}bner theory, and $p$-adic number theory have been applied \cite{BKR_ChipFiring, CSX_Paley, DJ_Cayley, GMMY}.

In this paper, we study the critical groups of {\em Adinkras}, which are decorated (colored, dashed) graphs introduced by physicists to encode special supersymmetry algebras \cite{rA}.
Roughly speaking, the vertices are particles, and the edges of different colors correspond to the actions of different basis elements of the algebra acting on these particles, in which an edge is dashed if the action flips the sign of the particle.
Therefore, Adinkras can be thought of as a signed analogue of Cayley graphs for supersymmetry algebras.
In \cite{IKKY}, the authors initiated the problem, and proved the following using a ``deform to $\ZZ[x]$-modules'' argument:

\begin{theorem} \cite{IKKY} \label{thm:IKKY}
Let $\A$ be an $N$-colored Adinkra on $v$ vertices.
Then the odd component of $\K(\A)$ is isomorphic to that of $(\ZZ/(N^2-N)\ZZ)^{v/2}$.
\end{theorem}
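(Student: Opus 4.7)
The plan is to exploit the Clifford-algebra structure carried by the signed adjacency matrix of an Adinkra. Let $A_i$ denote the signed adjacency matrix for color $i$. Each color is a perfect matching with $\pm 1$ signs, so $A_i^2 = I$; the Adinkra axiom (odd number of dashes in every bichromatic $4$-cycle) forces $A_i A_j + A_j A_i = 0$ for $i \neq j$. Summing $A = \sum_i A_i$ gives $A^2 = N I$, and hence the signed Laplacian $L = N I - A$ satisfies
\[
L(NI + A) = N^2 I - A^2 = (N^2 - N)\, I,
\]
so $\coker L$ is annihilated by $N^2 - N$ and has order $|\det L| = (N^2 - N)^{v/2}$.

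The next step is to reinterpret $\ZZ^v$ as a module over $R := \ZZ[T]/(T^2 - N)$, with $T$ acting as $A$. Since $L$ acts as $N - T \in R$ and $R/(N - T) \cong \ZZ/(N^2 - N)$ (via $T \mapsto N$), one has
\[
\coker L \cong \ZZ^v \otimes_R \ZZ/(N^2 - N).
\]
It therefore suffices to show that the $R$-module $\ZZ^v$ is free of rank $v/2$ after localizing at each odd prime $p$; this is the ``deform to $\ZZ[x]$-modules'' philosophy.

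Using the bipartite decomposition $A = \bigl(\begin{smallmatrix} 0 & B \\ B^T & 0 \end{smallmatrix}\bigr)$ with $B B^T = B^T B = N I$: for $p \nmid N$, $B$ is invertible over $\ZZ_p$ since $\det B = \pm N^{v/4}$ is a $p$-adic unit, and the basis vectors of one bipartite part form an $R_p$-basis of $\ZZ_p^v$. For odd $p \mid N$ such that $T^2 - N$ remains irreducible over $\QQ_p$ (i.e.\ $\sqrt{N} \notin \QQ_p$), the localization $R_p$ is a DVR, and $\ZZ_p^v$---being torsion-free over $R_p$ of generic rank $v/2$---is automatically $R_p$-free.

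The main obstacle is the remaining subcase: $p \mid N$ odd with $\sqrt{N} \in \QQ_p$, where $R_p$ is non-reduced and torsion-freeness no longer implies freeness. Here $R_p$-freeness must be extracted from the combinatorial structure of the Adinkra, namely that $B \bmod p$ has $\FF_p$-rank exactly $v/4$ (equivalently, its row span is a maximal isotropic subspace of $\FF_p^{v/2}$). This Lagrangian condition presumably follows from the doubly-even code defining the Adinkra together with the cocycle condition on the dashing, and once it is in hand Nakayama's lemma delivers freeness. Assembling the local contributions over all odd primes via the Chinese Remainder Theorem then recovers the odd part of $(\ZZ/(N^2 - N))^{v/2}$ as claimed.
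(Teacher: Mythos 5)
Your overall strategy---realizing $\ZZ^v$ as a module over $R=\ZZ[T]/(T^2-N)$ so that $L$ becomes multiplication by $N-T$, and reducing the theorem to local freeness of rank $v/2$ at each odd prime---is sound, and it is in the spirit of the ``deform to modules'' proof in \cite{IKKY} that this paper cites for the statement (the paper does not reprove it directly; its own independent route to the odd part, embedded in the proof of Theorem~\ref{mainthm}, goes through the monodromy pairing and Theorem~\ref{thm:tail_heavy} instead). Your preliminary computations are all correct: $A_i^2=I$, the odd-dashing axiom gives $A_iA_j=-A_jA_i$, hence $A^2=NI$, $L(NI+A)=(N^2-N)I$, $\det L=(N^2-N)^{v/2}$, and $R/(N-T)\cong\ZZ/(N^2-N)\ZZ$; the case $p\nmid N$ is complete.

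There is, however, a genuine gap, and it sits exactly where the content of the theorem lies. For an odd prime $p\mid N$, the claim is equivalent to $\operatorname{rank}_{\FF_p}(B)=v/4$ (your Lagrangian condition), and you do not prove it: ``presumably follows from the doubly-even code together with the cocycle condition'' is not an argument, and this is the heart of the matter. Your intermediate case is also misclassified: irreducibility of $T^2-N$ over $\QQ_p$ does \emph{not} make $R_p=\ZZ_p[T]/(T^2-N)$ a DVR; that holds only when $v_p(N)=1$ (Eisenstein). If $v_p(N)\ge 2$ --- e.g.\ $N=18$, $p=3$ --- then $R_p$ is a non-maximal order with $\dim_{\FF_p}\mathfrak{m}/\mathfrak{m}^2=2$, torsion-free does not imply free, and this case must be absorbed into the hard one. (Incidentally, when $v_p(N)=1$ no module theory is needed: $\coker L$ is killed by $N^2-N$ and has order $(N^2-N)^{v/2}$, while $\bar B\bar B^T=0$ gives $\operatorname{rank}_{\FF_p}L\le v/2$, and counting $p$-valuations pins down the $p$-part. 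It is precisely for $v_p(N)\ge 2$ that counting fails and the lower bound $\operatorname{rank}_{\FF_p}(B)\ge v/4$ must be extracted from the Adinkra axioms.) Your Nakayama endgame is fine once that rank statement is in hand, but as written the proof is incomplete for every odd prime dividing $N$ to order at least two.
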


However, the $2$-Sylow subgroup of $\K(\A)$ remains open, which seems tricky for its connection with another problem.
The underlying graph of an Adinkra is a Cayley graph $G$ of $\FF_2^n$, and the number of even invariant factors of $\K(\A)$ equals the corank of $L(G)$ over $\FF_2$, but this quantity is poorly-understood itself: even the simplest case of hypercube was a conjecture of Reiner before being solved by Bai \cite{Bai_hypercube}, and no good general description is known despite some recent progress \cite{GMMY}.
Nevertheless, we are able to bypass the apparent first hurdle and prove:

\begin{theorem} \label{mainthm}
Let $\A$ be an $N$-colored Adinkra on $v$ vertices.
Let the $2$-rank of the Laplacian of the underlying Cayley graph be $v/2-m$; we have $m\geq 0$ by \cite{GMMY}.
Then $\K(\A)\cong(\ZZ/2\ZZ)^m\oplus(\ZZ/\frac{N^2-N}{2}\ZZ)^m\oplus(\ZZ/(N^2-N)\ZZ)^{v/2-m}$.
\end{theorem}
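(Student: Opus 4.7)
The plan is to complete the description of $\K(\A)$ by determining its $2$-Sylow part $\K_2:=\K(\A)_{(2)}$; Theorem~\ref{thm:IKKY} already yields the odd part $(\ZZ/s\ZZ)^{v/2}$ where $s$ is the odd part of $N^2-N$. Two elementary invariants constrain $\K_2$. First, since $L(\A)\equiv L(G)\pmod{2}$, the number of nontrivial cyclic invariant factors of $\K_2$ equals the $\FF_2$-corank of $L(\A)$, namely $v/2+m$. Second, a determinant computation (consistent with Theorem~\ref{thm:IKKY}) gives $|\K(\A)|=(N^2-N)^{v/2}$, so $|\K_2|=2^{av/2}$ where $2^a\|N^2-N$. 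Writing $\K_2=\bigoplus_{k\ge 1}(\ZZ/2^k\ZZ)^{r_k}$, these data give $\sum_k r_k=v/2+m$ and $\sum_k k\,r_k=av/2$; the target partition is $r_1=r_{a-1}=m$, $r_a=v/2-m$.

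To single out this partition I would use the monodromy pairing $b\colon\K(\A)\times\K(\A)\to\QQ/\ZZ$, $b([x],[y])=x^{\top}L(\A)^{-1}y\bmod\ZZ$, which is defined whenever $L(\A)$ is invertible (true for any nontrivial signature). Restriction to $\K_2$ gives a symmetric perfect pairing $b_2$ valued in $\tfrac{1}{2^a}\ZZ/\ZZ$. I would combine this with the Cayley structure of the underlying graph: the $\FF_2^n$-translations act on $G$ and permute the possible signatures of the Adinkra, yielding natural identifications of $\K(\A)$ across different signatures. This should establish the signature-independence claimed in the abstract and reduce the pairing analysis to a conveniently symmetric signature, for which $b_2$ can be studied block by block in a Fourier-like basis that nearly block-diagonalizes $L(\A)$.

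Given self-duality under $b_2$, each cyclic summand $\ZZ/2^k$ of $\K_2$ must be paired with a dual summand of comparable order. The piece $(\ZZ/2\ZZ)^m\oplus(\ZZ/\tfrac{N^2-N}{2}\ZZ)^m$ of the target should then emerge as $m$ ``degenerate'' pairs forced by the extra $\FF_2$-corank beyond $v/2$: each additional unit of corank must be balanced by a summand of smaller $2$-adic valuation so as to preserve the total order $2^{av/2}$, and perfectness of $b_2$ links the small-order partner to a dual of order exactly $2^{a-1}$. The $\ZZ/(N^2-N)\ZZ$ summands then arise from the remaining $v/2-m$ self-orthogonal ``diagonal'' blocks.

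The main obstacle will be the last step: ruling out competing partitions (e.g.\ $\ZZ/2^2\ZZ\oplus\ZZ/2^{a-2}\ZZ$ in place of $\ZZ/2\ZZ\oplus\ZZ/2^{a-1}\ZZ$) that satisfy the same numerical invariants. Pure duality of a finite abelian $2$-group does not distinguish these, so one must analyze $L(\A)^{-1}$ modulo $2^{a-1}$ in enough detail to track the $2$-adic denominators of the pairing values---equivalently, a discriminant-form invariant of $b_2$---and show that it attains the extremal value that pins down the partition uniquely. Once this is in place, reassembling with Theorem~\ref{thm:IKKY} via the Chinese Remainder Theorem gives the stated decomposition.
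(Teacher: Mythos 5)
Your numerical bookkeeping is correct: the odd part follows from Theorem~\ref{thm:IKKY}, the number of nontrivial cyclic summands of the $2$-Sylow part equals the $\FF_2$-corank $v/2+m$, and the total order is $(N^2-N)^{v/2}$; you also correctly identify that these invariants alone cannot distinguish the target partition from competitors such as $\ZZ/4\ZZ\oplus\ZZ/2^{a-2}\ZZ$ in place of $\ZZ/2\ZZ\oplus\ZZ/2^{a-1}\ZZ$. But the step you defer --- ``analyze $L(\A)^{-1}$ modulo $2^{a-1}$ \dots\ and show that it attains the extremal value that pins down the partition uniquely'' --- is precisely the entire content of the theorem, so as written the proposal has a gap exactly where the proof must happen. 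The paper closes this gap with a concrete computation your plan does not contain: for a fixed color, the signed edge boundaries ${\bf e}_u-\epsilon{\bf e}_v$ of the $v/2$ monochromatic edges form an orthogonal family under the monodromy pairing, each with self-pairing $2/N$ (Proposition~\ref{prop:pairing}, obtained by explicitly solving $L(\A){\bf f}=(N^2-N)({\bf e}_u-{\bf e}_v)$ using the two-eigenvalue property of $L(\A)$). By \cite[Theorem~4.5]{DBS} this yields a subgroup $(\ZZ/\tfrac{N}{\gcd(2,N)}\ZZ)^{v/2}$ of $\K(\A)$, hence $\tfrac{N}{\gcd(2,N)}\mid f_i$ for all $i>v/2$; combined with $(N-1)\mid f_i$ for $i>v/2$ and the reciprocity $f_if_{v-i+1}=N^2-N$ from Theorem~\ref{thm:Adinkra_basic}, every $f_i$ with $i\le v/2$ is forced to equal $1$ or $2$, and the number of $2$'s is then read off from the $2$-corank. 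Note also that the paper works with all primes simultaneously via invariant factors rather than isolating $\K_2$ and reassembling by CRT.

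A secondary problem: you propose to ``reduce the pairing analysis to a conveniently symmetric signature'' by letting $\FF_2^n$-translations permute signatures and thereby identify the critical groups of different signatures. This cannot work as stated, because genuinely inequivalent signatures exist on non-generic underlying graphs (see Section~4 of the paper); signature-independence is Corollary~\ref{maincoro}, a \emph{consequence} of the main theorem, not something available beforehand. Any argument that first normalizes the signature by symmetry is implicitly assuming the corollary it is meant to prove.
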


Our proof differs from the main strategy in \cite{IKKY}, and uses {\em monodromy pairing} \cite{MonoPairing}.
While the pairing is defined for any critical group, the regularity of Adinkras yields a simple description of it \cite{DBS}.
We find a large ``orthonormal'' subset of $\K(\A)$, which implies that $\K(\A)$ contains a large subgroup whose structure is simple. Then we argue that the remaining part only depends on the $2$-rank.
To the best of our knowledge, this is the first time the monodromy pairing is being used to study the structure of specific critical groups.

As a corollary, we answer \cite[Conjecture~1]{IKKY} in the affirmative:

\begin{corollary} \label{maincoro}
$\K(\A)$ is independent of the signature of the Adinkra.
\end{corollary}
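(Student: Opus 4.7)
The plan is to deduce the statement directly from Theorem~\ref{mainthm}. The formula
$$(\ZZ/2\ZZ)^{m}\oplus(\ZZ/\tfrac{N^{2}-N}{2}\ZZ)^{m}\oplus(\ZZ/(N^{2}-N)\ZZ)^{v/2-m}$$
provided by that theorem depends only on the three parameters $v$, $N$, and $m$, so I would reduce the corollary to verifying that each of these is signature-invariant.

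The vertex count $v$ and the color count $N$ reference only the vertex set and the colored edges of $\A$, not their dashing, so I would observe immediately that they depend only on the chromotopology. For the integer $m$, recall that it is defined through the $2$-rank of the Laplacian of the underlying Cayley graph $G$; since $G$ is by definition obtained from $\A$ by discarding the dashing (i.e., the signature), its Laplacian over $\FF_{2}$ and hence $m$ are also signature-independent. I would spell out these two observations, after which Theorem~\ref{mainthm} yields identical groups for any two Adinkras that differ only in their signature.

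The main obstacle, such as it is, does not reside in the corollary at all: the entire content has been absorbed into Theorem~\ref{mainthm}, which refines Theorem~\ref{thm:IKKY} to pin down the $2$-Sylow subgroup in terms of signature-independent invariants of the chromotopology. Given that theorem, the corollary essentially writes itself, and \cite[Conjecture~1]{IKKY} is resolved as a one-line consequence.
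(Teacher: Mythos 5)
Your proposal is correct and matches the paper's own argument: the paper likewise deduces the corollary from Theorem~\ref{mainthm} by noting that the $2$-rank of the Laplacian (hence $m$), along with $v$ and $N$, is independent of the signature. Nothing further is needed.
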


It is worth noting that while only the signed (dashed) graph structure of $\A$ was used to define $\K(\A)$, the existence of a compatible edge coloring is essential.
It is another curious instance in mathematics that admitting extra structure imposes constraints on seemingly irrelevant invariants of the object.

\section{Preliminaries} \label{sec:prelim}

Unless otherwise specified, all (signed) graphs are finite, simple, and connected.

\subsection{Adinkras}

\begin{definition}
A {\em signed graph} $G_{\sigma}$ is a graph $G$ together with an assignment $\sigma:E(G)\rightarrow\{\pm\}$ (a {\em signature} of $G$).
{\em Switching} a vertex flips the signs of the edges incident to it.
\end{definition}

\begin{definition} \label{def:Adinkra}
For $N\geq 2$, an $N$-colored {\em Adinkra} $\A$ is a signed graph with each edge colored by one of $N$ colors\footnote{Technically, the data here only defines a {\em Cliffordinkra}, as we need to further choose an acyclic orientation of the underlying graph to define an Adinkra. However, there are no compatibility conditions between the choice and the rest of the data, so we omit the orientation here.}, satisfying the following conditions:
\begin{enumerate}
\item the graph is bipartite;
\item every vertex is incident to exactly one edge of each color;
\item for every pair of distinct colors, the graph restricted to edges of these colors is a disjoint union of $4$-cycles;
\item each bi-colored $4$-cycle contains an odd number of negative edges.
\end{enumerate}
\end{definition}

\begin{example}

In Figure~\ref{fig:Adinkras}, on the left is a $3$-colored Adinkra supported on $Q_3$, and on the right is a $4$-colored Adinkra supported on $K_{4,4}$.
Negative edges are represented by dashed edges.

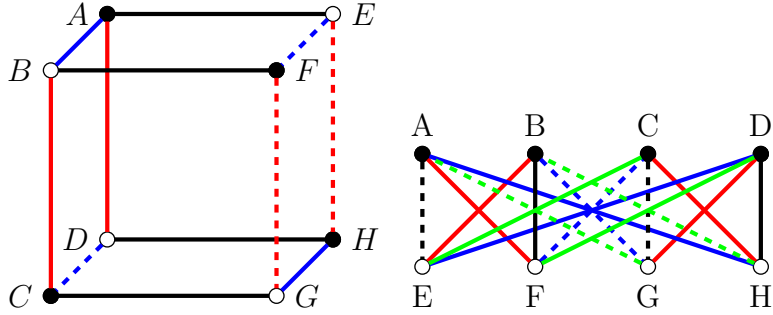
\begin{figure}
\begin{center}
\begin{tikzpicture}[scale=0.15]
\GraphInit[vstyle=Welsh]
\SetVertexNormal[MinSize=5pt]
\SetUpEdge[labelstyle={draw},style={ultra thick}]
\tikzset{Dash/.style={dashed,draw,ultra thick}}
\Vertex[x=0,y=0,Math,L={B},Lpos=180]{B}
\Vertex[x=20,y=0,Math,L={F},Lpos=0]{F}
\Vertex[x=0,y=-20,Math,L={C},Lpos=180]{C}
\Vertex[x=20,y=-20,Math,L={G},Lpos=0]{G}
\Vertex[x=5,y=5,Math,L={A},Lpos=180]{A}
\Vertex[x=25,y=5,Math,L={E},Lpos=0]{E}
\Vertex[x=5,y=-15,Math,L={D},Lpos=180]{D}
\Vertex[x=25,y=-15,Math,L={H},Lpos=0]{H}
\AddVertexColor{black}{F,C,A,H}
\Edge(A)(E)
\Edge(D)(H)
\Edge[color=red](A)(D)
\Edge[color=red,style=Dash](E)(H)
\Edge[color=blue](A)(B)
\Edge[color=blue,style=Dash](E)(F)
\Edge[color=blue,style=Dash](D)(C)
\Edge[color=blue](G)(H)
\Edge(B)(F)
\Edge(C)(G)
\Edge[color=red](B)(C)
\Edge[color=red,style=Dash](F)(G)
\end{tikzpicture}
\begin{tikzpicture}[scale=0.15]
\GraphInit[vstyle=Welsh] 
\SetVertexNormal[MinSize=5pt]  
\SetUpEdge[labelstyle={draw},style={ultra thick}]
\tikzset{Dash/.style = {ultra thick,dashed,draw}}
\Vertex[x=-10,y=0,Lpos=270,L={E}]{G}
\Vertex[x=0,y=0,Lpos=270,L={F}]{A}
\Vertex[x=10,y=0,Lpos=270,L={G}]{H}
\Vertex[x=20,y=0,Lpos=270,L={H}]{C}
\Vertex[x=-10,y=10,Lpos=90,L={A}]{E}
\Vertex[x=0,y=10,Lpos=90]{B}
\Vertex[x=10,y=10,Lpos=90,L={C}]{F}
\Vertex[x=20,y=10,Lpos=90]{D}
\AddVertexColor{black}{B,F,D,E}
\Edge[color=red](B)(G)
\Edge[color=red](D)(H)
\Edge[color=red](F)(C)
\Edge[color=red](A)(E)
\Edge[color=blue](D)(G)
\Edge[color=blue, style=Dash](B)(H)
\Edge[color=blue](E)(C)
\Edge[color=blue, style=Dash](A)(F)
\Edge(D)(C)
\Edge[style=Dash](F)(H)
\Edge[style=Dash](E)(G)
\Edge(A)(B)
\Edge[color=green](G)(F)
\Edge[color=green,style=Dash](H)(E)
\Edge[color=green](A)(D)
\Edge[color=green,style=Dash](C)(B)
\end{tikzpicture}
\end{center}
  \caption{Two examples of Adinkras.}
  \label{fig:Adinkras}
\end{figure}

\end{example}

We have the following theorem concerning the underlying graph of an Adinkra.

\begin{theorem} \cite{DFGHILM} \label{thm:underlying_G}
A graph admits an Adinkra structure if and only if it is the quotient of a hypercube $Q_t$ by a {\em doubly even code} (a subspace $\Ccal$ of $\FF_2^t$ where the size of the support of each element is divisible by $4$), which is necessarily a Cayley graph of $\FF_2^n$, where $n=t-\dim\Ccal$.
\end{theorem}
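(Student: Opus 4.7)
The plan is to realize the $N$-coloring of an $N$-colored Adinkra $\A$ as encoding a transitive action of $\FF_2^N$ on the vertex set, extract $\Ccal$ as the stabilizer of a base vertex, and use the signature condition (4) to upgrade $\Ccal$ to a doubly even code. For the action, for each color $i\in\{1,\ldots,N\}$ define an involution $q_i$ sending each vertex to its unique color-$i$ neighbor, which is well-defined by condition (2). Condition (3) guarantees that a bi-colored $ij$ 4-cycle is precisely an orbit of $\la q_i,q_j\ra$, forcing $q_iq_j=q_jq_i$. So $e_i\mapsto q_i$ extends to an action of $\FF_2^N$ on the vertices, which is transitive by connectedness. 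Fixing a base vertex $v_0$, the vertex set identifies with $\FF_2^N/\Ccal$ where $\Ccal$ is the stabilizer, and the edges between cosets are given by adding standard basis vectors; thus the underlying graph is $Q_N/\Ccal$ with the inherited $N$-coloring.

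The main substantive step is showing that $\Ccal$ is doubly even. Bipartiteness (condition (1)) readily gives that $|c|$ is even for all $c\in\Ccal$: such a $c$ corresponds to a closed walk at $v_0$, which must have even length in a bipartite graph. To upgrade even to $|c|\equiv 0\pmod 4$, I would interpret the signature $\sigma$ as an $\FF_2$-valued $1$-cochain on $Q_N$ whose coboundary evaluated on each bi-colored $4$-face is the nontrivial element of $\FF_2$ by condition (4). Any $c\in\Ccal$ can be realized by a closed walk at $v_0$ in any order of its coordinates; reordering the walk by an adjacent transposition of colors $i,j$ replaces two edges of a bi-colored 4-cycle with the other two and, by condition (4), multiplies the running sign product by $-1$. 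Applying this to pairs of orderings that are permutations of the same support $S\subseteq\{1,\ldots,N\}$ with $|S|=|c|$, the sign product changes by $(-1)^k$ where $k$ counts inversions, and the resulting global consistency forces $\binom{|c|}{2}\equiv 0\pmod 2$. Combined with $|c|$ even, this yields $|c|\equiv 0\pmod 4$.

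For the converse, given a doubly even code $\Ccal\subseteq\FF_2^t$, I would construct an Adinkra on $Q_t/\Ccal$ by inheriting the $t$-coloring from $Q_t$ and producing a signature satisfying (4); existence of such a signature reduces to a cohomology class vanishing, which is guaranteed precisely by $\Ccal$ being doubly even (the obstruction cocycle, evaluated on any $c\in\Ccal$, is $\binom{|c|/2}{2}\pmod 2$, which vanishes when $4\mid|c|$). For the Cayley graph assertion, $\FF_2^t/\Ccal$ is an abelian group of order $2^{t-\dim\Ccal}=2^n$ acting freely and transitively on the vertex set of $Q_t/\Ccal$ by translation, with connecting set the images $\bar e_1,\ldots,\bar e_t$; so the quotient graph is $\Cay(\FF_2^n,\{\bar e_1,\ldots,\bar e_t\})$.

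The hardest step is the doubly-even upgrade: conditions (1)--(3) only give evenness of the support, so the extra mod-$4$ information must be teased out of the sign condition (4). The inversion-counting argument above works, but the cleanest conceptual presentation is probably cohomological: identify the right 2-cocycle on $Q_N$ whose pullback detects precisely when a signature compatible with (4) descends, and show that its pairing with $\Ccal$ vanishes iff $\Ccal$ is doubly even. Once the doubly-even dichotomy is established, the remaining pieces (the converse construction and the Cayley-graph identification) follow from standard quotient constructions for abelian group actions on graphs.
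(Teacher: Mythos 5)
The paper offers no proof of this statement: Theorem~\ref{thm:underlying_G} is imported from \cite{DFGHILM} as a black box, so there is no internal argument to compare yours against. That said, your outline reconstructs what is essentially the standard proof from that literature, and its architecture is sound: condition (2) makes each color a fixed-point-free involution $q_i$, condition (3) forces $q_iq_j=q_jq_i$, connectedness gives transitivity, so the vertex set is $\FF_2^N/\Ccal$ with $\Ccal$ the stabilizer and the graph is $Q_N/\Ccal$; bipartiteness gives evenness of $\Ccal$; and the mod-$4$ refinement is extracted from the sign condition (4), which is indeed the crux.

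Two points need tightening. First, the ``global consistency'' you invoke in the forward direction should be named: for $c\in\Ccal$ with support of even size $k$, compare the closed walk at $v_0$ realizing one ordering of the support with the walk realizing the \emph{reversed} ordering. By commutativity of the $q_i$ these are the same closed walk traversed backwards, hence have equal sign products, while your adjacent-transposition computation says they differ by $(-1)^{\binom{k}{2}}$; this yields $\binom{k}{2}\equiv 0\pmod 2$ and hence $4\mid k$. Second, the parenthetical obstruction formula $\binom{|c|/2}{2}\bmod 2$ in your converse is wrong: it equals $1$ for $|c|=4$ and for $|c|=12$, so it does not vanish on doubly even codewords. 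The correct quantity is $\binom{|c|}{2}\equiv |c|/2\pmod 2$ for even $|c|$, consistent with your forward direction; with that correction, the standard dashing $\sigma(x,x+e_i)=(-1)^{x_1+\cdots+x_{i-1}}$ on $Q_t$, which puts an odd number of dashes on every bi-colored $4$-cycle, descends to $Q_t/\Ccal$ (up to switching) exactly when $\Ccal$ is doubly even. You should also record the small verifications that the quotient is a simple graph satisfying (1)--(3), i.e., that $\Ccal$ contains no word of weight $1$, $2$, or $3$: this is automatic for a nonzero doubly even code in the converse, and in the forward direction it is forced by simplicity and bipartiteness. With these repairs the argument is complete and matches the cited source's approach.
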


\subsection{Critical Groups and Their Monodromy Pairings}

\begin{definition}
The {\em Laplacian} of $G_{\sigma}$ is $L(G_\sigma)=D-A_{\sigma}$, where $D$ is the diagonal matrix whose entries are the vertex degrees, and $A_{\sigma}$ is the signed adjacency matrix in which a positive (respectively, negative) edge $xy$ is represented by $A_{xy}=A_{yx}=1$ (respectively, $-1$).
The Laplacian of an ordinary graph $G$ can be viewed/defined as $L(G_\sigma)$ with $\sigma\equiv +$.

The {\em critical group} $\K(G_\sigma)$ is the torsion part of the cokernel of $L(G_\sigma)$ over $\ZZ$.
In the case of Adinkras (indeed, any {\em unbalanced} signed graphs), the cokernel itself is finite, whereas it is always of rank 1 for ordinary graphs.
\end{definition}

As two simple observations: (1) switching vertices perserves $\K(G_\sigma)$, and whether the signed graph is an Adinkra; and (2) the rank of $L(G_\sigma)$ over $\FF_2$ ({\em $2$-rank}) is independent of $\sigma$, in particular, it is equal to that of $L(G)$, and the number of even invariant factors of $\K(G_\sigma)$ equals $v$ minus ($2$-rank of $L$ + rank of $\coker L(G_\sigma)$).

The critical group of an Adinkra (or more generally, the torsion of the cokernel of any symmetric integer matrix) is equipped with a canonical pairing $\langle \cdot,\cdot\rangle$ taking values in $\mathbb{Q}/\mathbb{Z}$, known as the monodromy pairing.
It is related to several other pairings in arithmetic geometry and discrete potential theory \cite{BS_Pairing, BL_Pairing}.

\begin{definition} \label{def:pairing}
Let ${\bf x},{\bf y}\in\mathbb{Z}^v$ be two vectors representing two elements of $\K(\A)$.
Choose a positive integer $m$ such that $L(\A){\bf f}=m{\bf x}$ for some ${\bf f}\in \mathbb{Z}^v$.
Then the {\em monodromy pairing} between $[{\bf x}],[{\bf y}]$ is $\langle [{\bf x}],[{\bf y}]\rangle:=\frac{{\bf f}^T{\bf y}}{m}\in\mathbb{Q}/\mathbb{Z}$.
\end{definition}

\begin{proposition} \cite[Lemma~1.1]{BL_Pairing} 
The pairing is well-defined, bilinear, and symmetric.
\end{proposition}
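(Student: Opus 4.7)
The plan is to verify the three claims (well-defined, bilinear, symmetric) in order, using two structural facts already noted in the excerpt: since $\A$ is unbalanced, $\coker L(\A)$ is finite, hence $L(\A)$ is injective over $\QQ$ and in particular its kernel contains no nonzero integer vector; and $L(\A)$ is symmetric. These are exactly the ingredients one needs to adapt the standard construction of a non-degenerate $\QQ/\ZZ$-valued pairing on the torsion of the cokernel of a symmetric integer matrix with finite cokernel.

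For well-definedness, I would check independence from three choices: the auxiliary data $(m,\mathbf{f})$, the representative $\mathbf{x}$, and the representative $\mathbf{y}$. Given a second pair $(m',\mathbf{f}')$ with $L(\A)\mathbf{f}' = m'\mathbf{x}$, the combination $m'\mathbf{f} - m\mathbf{f}'$ lies in $\ker L(\A)$ and hence vanishes, so $\mathbf{f}^T\mathbf{y}/m = \mathbf{f}'^T\mathbf{y}/m'$ already as an equality in $\QQ$. Replacing $\mathbf{x}$ by $\mathbf{x} + L(\A)\mathbf{h}$ permits the choice $\mathbf{f}' = \mathbf{f} + m\mathbf{h}$, shifting the pairing by the integer $\mathbf{h}^T\mathbf{y}$. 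Replacing $\mathbf{y}$ by $\mathbf{y} + L(\A)\mathbf{k}$ shifts the pairing by $\mathbf{f}^T L(\A)\mathbf{k}/m$, and by symmetry of $L(\A)$ this rewrites as $(L(\A)\mathbf{f})^T\mathbf{k}/m = \mathbf{x}^T\mathbf{k}$, again an integer.

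Bilinearity is then immediate from the formula, after observing that for any two inputs in the first slot one can rescale to a common denominator $m$. For symmetry, choose $\mathbf{g}\in\ZZ^v$ with $L(\A)\mathbf{g}=m'\mathbf{y}$; then the chain
\[ m'\,\mathbf{f}^T\mathbf{y} \;=\; \mathbf{f}^T L(\A)\mathbf{g} \;=\; (L(\A)\mathbf{f})^T\mathbf{g} \;=\; m\,\mathbf{x}^T\mathbf{g}, \]
where the middle step uses symmetry of $L(\A)$, gives the equality of $\mathbf{f}^T\mathbf{y}/m$ and $\mathbf{g}^T\mathbf{x}/m'$ already in $\QQ$, hence a fortiori in $\QQ/\ZZ$.

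I do not foresee a genuine obstacle. The only subtle point is the appeal to $\QQ$-injectivity of $L(\A)$ when removing the choice of auxiliary $(m,\mathbf{f})$, which is exactly where the unbalanced hypothesis on $\A$ enters. Symmetry of $L(\A)$ is invoked in precisely two places---independence of $\mathbf{y}$ and symmetry of the pairing---and this is the sole reason the construction does not extend verbatim to non-symmetric integer matrices with finite cokernel.
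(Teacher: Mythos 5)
Your proof is correct and is exactly the standard argument; the paper itself gives no proof, delegating the statement to \cite[Lemma~1.1]{BL\_Pairing}, and your verification (using symmetry of $L(\A)$ for independence of the $\mathbf{y}$-representative and for symmetry of the pairing, and finiteness of $\coker L(\A)$, i.e.\ triviality of $\ker L(\A)$, for independence of the auxiliary pair $(m,\mathbf{f})$) matches that standard treatment. Your closing remark correctly isolates the one place where the unbalanced hypothesis is needed; in the general symmetric case with nontrivial kernel one would instead use that torsion classes are represented by vectors orthogonal to $\ker L$, but that is not needed here.
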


\section{Proof of the Main Theorem}

Index the rows and columns of $L(\A)$ by $V(\A)\cong\FF_2^n$, and denote by $\{{\bf e}_u: u\in V(\A)\}$ the standard basis of $\ZZ^{V(\A)}$.

We first collect some results on the Laplacians and critical groups of Adinkras from \cite{IKKY} that can be obtained in a more elementary manner.

\begin{theorem} \label{thm:Adinkra_basic}
Let $\A$ be an $N$-colored Adinkra on $v$ vertices.
Then $L(\A)$ has exactly two distinct eigenvalues $N\pm\sqrt{N}$ of equal multiplicities $v/2$, and $|\K(\A)|=\det(L(\A))=(N^2-N)^{v/2}$.

The invariant factors $f_1\mid f_2\mid\ldots\mid f_v$ of $L(\A)$ satisfy the relation $f_if_{v-i+1}=N^2-N,\forall i$.
Moreover, for $i>v/2$, $(N-1)\mid f_i$.
\end{theorem}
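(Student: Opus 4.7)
The plan is to pivot everything on the identity $A_\sigma^2 = NI$, which follows directly from the Adinkra axioms. The diagonal entries give $\sum_{w \sim u} \sigma(uw)^2 = \deg(u) = N$. For an off-diagonal entry $(A_\sigma^2)_{uv}$ with $u \neq v$, any length-$2$ walk $u \to w \to v$ must use two distinct colors, since using the same color twice returns to $u$. For each unordered pair of distinct colors, axioms (2)--(3) produce exactly two such walks, which together trace out a bicolored $4$-cycle through $u$ and $v$; by axiom (4) this $4$-cycle contains an odd number of negative edges, so the two sign products along the walks are opposite and cancel.

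From $A_\sigma^2 = NI$, the spectrum of $A_\sigma$ lies in $\{\pm\sqrt{N}\}$; switching every vertex in one bipartite class of $\A$---an operation that preserves both the Adinkra structure and the Smith normal form of $L(\A)$---sends $A_\sigma$ to $-A_\sigma$, so the two eigenvalues must occur with equal multiplicity $v/2$. Consequently $L(\A) = NI - A_\sigma$ has spectrum $N \pm \sqrt{N}$ with matching multiplicities, and since $L(\A)$ is nonsingular over $\QQ$ we obtain $|\K(\A)| = \det L(\A) = (N^2 - N)^{v/2}$.

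For the duality $f_i f_{v-i+1} = N^2 - N$, I would set $M := NI + A_\sigma$ and exploit the factorization $L(\A) \cdot M = N^2 I - A_\sigma^2 = (N^2-N)\, I$. The switching observation above sends $L(\A)$ to $M$, so they share the same Smith normal form. On the other hand, $M = (N^2 - N)\, L(\A)^{-1}$ forces the Smith form of $M$ to have diagonal entries $(N^2-N)/f_v, \ldots, (N^2-N)/f_1$ in the correct increasing-divisibility order (because $f_i \mid f_{i+1}$ is equivalent to $(N^2-N)/f_{i+1} \mid (N^2-N)/f_i$), and uniqueness of the Smith form pins down $f_i f_{v-i+1} = N^2-N$.

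Finally, the divisibility $(N-1) \mid f_i$ for $i > v/2$ I would establish prime-locally. For an odd prime $p \mid N-1$, reducing modulo $p$ gives $L(\A) \equiv I - A_\sigma$ together with $A_\sigma^2 \equiv I$, so $L(\A)(L(\A) - 2I) \equiv 0 \pmod p$; combined with the characteristic polynomial of $L(\A)$ reducing to $\lambda^{v/2}(\lambda - 2)^{v/2}$, this makes $L(\A)$ diagonalizable over $\FF_p$ of rank exactly $v/2$, and the duality then pins $v_p(f_i) = v_p(N-1)$ for all $i > v/2$. The main obstacle is the prime $2$ when $N$ is odd: one only has $L(\A)^2 \equiv 0 \pmod 2$, giving the weaker bound $\mathrm{rank}_{\FF_2} L(\A) \leq v/2$, so the $2$-adic content is genuinely delicate---this is precisely the phenomenon captured by the integer $m$ in Theorem~\ref{mainthm}, and at this preparatory stage one invokes Theorem~\ref{thm:IKKY} to extract the odd component.
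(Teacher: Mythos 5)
First, note that the paper itself does not prove this statement: it is imported from \cite{IKKY} as background (``results \dots that can be obtained in a more elementary manner''), so your proposal can only be judged on its own terms rather than against an in-paper argument. Most of it holds up well. The identity $A_\sigma^2=NI$ from the Adinkra axioms, the equal multiplicities of $\pm\sqrt{N}$ (via switching one bipartite class, or even more simply from $\operatorname{tr}A_\sigma=0$), the determinant computation, and the duality $f_if_{v-i+1}=N^2-N$ obtained from $L(\A)(NI+A_\sigma)=(N^2-N)I$ together with the observation that switching a bipartite class identifies the Smith normal forms of the two factors---all of this is correct and elementary. The argument at odd primes $p\mid N-1$ (coprimality of $\lambda$ and $\lambda-2$ in $\FF_p[\lambda]$, hence diagonalizability, hence $\operatorname{rank}_{\FF_p}L(\A)=v/2$ from the reduced characteristic polynomial, hence $p\nmid f_i$ for $i\le v/2$ and the full $p$-adic valuation for $i>v/2$ by duality) is also sound.

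The genuine gap is the final claim at the prime $2$ when $N$ is odd. In that case $2\mid N-1$, so ``$(N-1)\mid f_i$ for $i>v/2$'' includes the assertion that $2^{a}\mid f_i$ for $i>v/2$, where $2^{a}$ is the exact power of $2$ dividing $N-1$; by your own duality $f_if_{v-i+1}=N(N-1)$ this is \emph{equivalent} to every $f_i$ with $i\le v/2$ being odd, i.e.\ to $\operatorname{rank}_{\FF_2}L(\A)=v/2$ exactly, i.e.\ to $m=0$ whenever $N$ is odd. Your congruence $L(\A)^2\equiv 0\pmod 2$ only yields the inequality $\operatorname{rank}_{\FF_2}L(\A)\le v/2$, and invoking Theorem~\ref{thm:IKKY} does not close the gap: that theorem describes only the odd component of $\K(\A)$ and says nothing about its $2$-Sylow subgroup, which is exactly what is at stake here. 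So as written your proposal establishes the divisibility $(N-1)\mid f_i$ only up to odd parts. The missing ingredient is the nontrivial $2$-rank input discussed in Section~4 of the paper: for odd $N$ the doubly even code $\Ccal$ cannot contain the all-ones word (its weight $N$ is not divisible by $4$), so the underlying Cayley graph is generic and its $2$-rank equals $v/2$ by \cite{GMMY}, \cite{DIL}. You need to import that fact explicitly (or supply an independent argument); note that this is not a pedantic edge case, since the proof of Theorem~\ref{mainthm} uses the full integer divisibility $(N-1)\mid f_i$ precisely to force $f_i=N^2-N$ for all $i>v/2$ when $N$ is odd.
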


The key (and neat) observation is that the signed boundaries of a family of monochromatic edges are ``orthonormal'' with respect to $\langle \cdot,\cdot\rangle$.

\begin{proposition} \label{prop:pairing}
Let $\A$ be an Adinkra of $N\geq 3$ colors and let $uv$ be an edge of $\A$ of sign $\epsilon$.
Then $\langle {\bf e}_u-\epsilon {\bf e}_v, {\bf e}_u-\epsilon {\bf e}_v\rangle=\frac{2}{N}\neq 0\in\mathbb{Q}/\mathbb{Z}$.
Let $xy$ be another edge of the same color and of sign $\epsilon'$.
Then $\langle {\bf e}_u-\epsilon {\bf e}_v, {\bf e}_x-\epsilon' {\bf e}_y\rangle=0$.
\end{proposition}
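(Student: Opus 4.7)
The plan is to exploit the color decomposition of the signed adjacency matrix. For each color $c$, let $P_c$ be the signed adjacency matrix restricted to color-$c$ edges; since the color class is a perfect matching on $V(\A)$, each $P_c$ is a signed involution, i.e.\ $P_c^2=I$. Using condition~(4) of Definition~\ref{def:Adinkra}, one verifies the anti-commutation $P_cP_{c'} + P_{c'}P_c = 0$ for $c\neq c'$. Hence the signed adjacency matrix $A = \sum_c P_c$ satisfies $A^2 = NI$, and combining with $L(\A) = NI - A$ yields the key identity
\[
L(\A)\,(2NI - L(\A)) = (N^2-N)\,I.
\]

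This identity allows me to apply Definition~\ref{def:pairing} with $m=N^2-N$ and $f=(2NI-L(\A))({\bf e}_u-\epsilon{\bf e}_v)$. Since the edge $uv$ has color $c$ and sign $\epsilon$, $P_c({\bf e}_u-\epsilon{\bf e}_v) = -({\bf e}_u-\epsilon{\bf e}_v)$, so
\[
f = (N-1)({\bf e}_u-\epsilon{\bf e}_v) + \sum_{c'\neq c} P_{c'}({\bf e}_u-\epsilon{\bf e}_v).
\]
Each summand $P_{c'}({\bf e}_u-\epsilon{\bf e}_v)$ with $c'\neq c$ is supported on the color-$c'$ neighbors of $u,v$, which lie outside $\{u,v\}$. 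Therefore $f^T({\bf e}_u-\epsilon{\bf e}_v) = 2(N-1)$, and dividing by $m = N(N-1)$ produces $2/N$, nonzero in $\QQ/\ZZ$ precisely when $N\geq 3$.

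For the cross pairing with a distinct color-$c$ edge $xy$ of sign $\epsilon'$, the supports of ${\bf e}_u-\epsilon{\bf e}_v$ and ${\bf e}_x-\epsilon'{\bf e}_y$ are disjoint since color-$c$ is a matching, so the direct term vanishes. A summand $P_{c'}({\bf e}_u-\epsilon{\bf e}_v)$ contributes to $f^T({\bf e}_x-\epsilon'{\bf e}_y)$ only when $P_{c'}$ sends $\{u,v\}$ onto $\{x,y\}$, i.e.\ only when $u,v,x,y$ span a bi-colored $\{c,c'\}$-cycle in $\A$. The step I expect to be the main obstacle is verifying that condition~(4) of Definition~\ref{def:Adinkra} forces cancellation within each such contribution. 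Concretely, in the subcase where the color-$c'$ neighbors satisfy $u\to x$ and $v\to y$, the contribution reduces to $\sigma(ux) + \epsilon\epsilon'\,\sigma(vy)$; the $4$-cycle $u,v,y,x$ has edge signs $\epsilon,\sigma(vy),\epsilon',\sigma(ux)$ with product $-1$, so $\epsilon\epsilon'\,\sigma(vy) = -\sigma(ux)$, giving the desired cancellation. The subcase $u\to y,\,v\to x$ is symmetric, so summing gives a total cross pairing of zero, completing the argument.
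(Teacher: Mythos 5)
Your proof is correct and follows essentially the same route as the paper: both produce the explicit preimage ${\bf f}=(2NI-L(\A))({\bf e}_u-\epsilon{\bf e}_v)$ from the quadratic identity $L(\A)(2NI-L(\A))=(N^2-N)I$ and then evaluate the dot products locally, using the odd-dashing condition on bi-colored $4$-cycles to force the cross terms to cancel. The only (welcome) difference is that you derive the quadratic identity self-containedly from the anticommutation of the signed color matchings ($A^2=NI$) rather than citing the two-eigenvalue property of $L(\A)$ and \cite[Equation~(3.3)]{DBS}, and you handle arbitrary signs directly instead of first switching to $\epsilon=\epsilon'=+$.
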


\begin{proof}
Without loss of generality, we may assume $\epsilon=\epsilon'=+$ by switching.
By the first half of Theorem~\ref{thm:Adinkra_basic}, $L(\A)$ has two distinct eigenvalues, so it satisfies the condition in \cite[Equation~(3.3)]{DBS}, and we can solve $m({\bf e}_u-{\bf e}_v)=L_\A{\bf f}$ by
\begin{equation} \label{eq:L}
(N^2-N)({\bf e}_u-{\bf e}_v)
= L_\A[(N-1){\bf e}_u-(N-1){\bf e}_v+\sum_{w\in N(u)\setminus v}\sigma(uw){\bf e}_{w}-\sum_{w\in N(v)\setminus u}\sigma(vw){\bf e}_w],
\end{equation}
here $N(x)$ is the neighborhood of the vertex $x$ and $\sigma(e)$ is the sign of the edge $e$.

By Definition~\ref{def:pairing} and (\ref{eq:L}), $\langle {\bf e}_u-{\bf e}_v, {\bf e}_u-{\bf e}_v\rangle=\frac{2(N-1)}{N^2-N}=\frac{2}{N}$, which is non-zero in $\mathbb{Q}/\mathbb{Z}$ as $N\geq 3$.

For the second statement, $\{x,y\}$ and $\{u,v\}$ are necessarily disjoint by (2) of Definition~\ref{def:Adinkra}.
If $x$ is adjacent to $u$ along a positive edge of color $c$, (1) of Definition~\ref{def:Adinkra} guarantees that $x$ is not adjacent to $v$, and (3) and (4) of Definition~\ref{def:Adinkra} ensure $y$ is adjacent to $v$ along a negative edge of the same color but not to $u$.
Now (\ref{eq:L}) implies $\langle {\bf e}_u-{\bf e}_v, {\bf e}_x-{\bf e}_y\rangle=\frac{1-1}{N^2-N}=0$; the cases when $x$ is adjacent to $v$ and/or the edge is negative are essentially the same.
The case when $x$ is not adjacent to $u$ nor $v$ is easier as $y$ is not adjacent to $u,v$ either, and the pairing is simply $\frac{0-0}{N^2-N}=0$.
\end{proof}

Next, we state the result from \cite{DBS} that explains how an orthonormal subset implies a ``rectangular'' subgroup.

\begin{theorem} [{\cite[Theorem~4.5]{DBS}}]  \label{thm:tail_heavy}
Let $G$ be a finite abelian group equipped with a monodromy pairing $\langle \cdot,\cdot \rangle$.
Suppose there exist $g_1,\ldots,g_l\in G$ whose pairwise pairings are zero, and for every $i$, $\langle g_i,g_i \rangle = \frac{\mu}{\eta}$ for relatively prime $\mu,\eta\in\mathbb{Z}_{>0}$.
Then $G$ contains a subgroup isomorphic to $(\mathbb{Z}/\eta\mathbb{Z})^l$.
\end{theorem}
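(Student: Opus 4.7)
The plan is to use the orthogonality of the $g_i$ to produce a natural surjection from a subgroup of $G$ onto $(\ZZ/\eta\ZZ)^l$, and then invoke the classification of finite abelian groups to upgrade this quotient to a subgroup. Concretely, I would first show that every integer relation $\sum_{i=1}^{l} a_i g_i = 0$ in $G$ forces $\eta \mid a_i$ for each $i$: pairing both sides with $g_j$, bilinearity together with $\la g_i,g_j\ra = 0$ for $i \ne j$ kills every term except $i=j$, leaving $a_j \cdot \mu/\eta = 0$ in $\QQ/\ZZ$, whence $\eta \mid a_j \mu$ and therefore $\eta \mid a_j$ by the coprimality of $\mu$ and $\eta$.

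Set $L := \la g_1,\ldots,g_l\ra \le G$, and write $\psi\colon \ZZ^l \twoheadrightarrow L$ for the natural presentation $(a_i) \mapsto \sum a_i g_i$. The previous paragraph says $\ker\psi \subseteq \eta\ZZ^l$, so $\psi$ descends to a surjection $L = \ZZ^l/\ker\psi \twoheadrightarrow \ZZ^l/\eta\ZZ^l \cong (\ZZ/\eta\ZZ)^l$. Thus $L$ admits $(\ZZ/\eta\ZZ)^l$ as a quotient, which already tells us that $|L| \ge \eta^l$, but more importantly constrains the invariant-factor profile of $L$.

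To finish, I would invoke the standard fact that for a finite abelian group every isomorphism class of quotient is also realized as an isomorphism class of subgroup (immediate from the invariant-factor classification, or equivalently from the self-duality $H \cong \mathrm{Hom}(H,\QQ/\ZZ)$ which sends subgroups to quotients). Applied to $L$, this yields a subgroup of $L$ — hence of $G$ — isomorphic to $(\ZZ/\eta\ZZ)^l$, as required. The only non-routine point is this last step: the pairing argument intrinsically produces the rectangular group as a quotient, and the structural dictionary for finite abelian groups is what converts that into a subgroup; I expect no serious obstacle beyond recording these three moves.
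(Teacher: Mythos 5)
Your argument is correct and complete. Note that the paper does not actually prove this statement --- it imports it verbatim as \cite[Theorem~4.5]{DBS} --- so there is no internal proof to compare against; I can only certify your version on its own merits. All three of your moves check out: bilinearity and symmetry of the pairing (guaranteed by the cited Proposition) together with orthogonality show that any relation $\sum_i a_i g_i = 0$ forces $a_j\mu/\eta = 0$ in $\QQ/\ZZ$, hence $\eta \mid a_j$ by coprimality; therefore $\ker\psi \subseteq \eta\ZZ^l$ and $L = \langle g_1,\ldots,g_l\rangle$ surjects onto $(\ZZ/\eta\ZZ)^l$; and the conversion of a quotient into a subgroup via the self-duality of finite abelian groups (equivalently, via the invariant-factor classification) is standard and correctly applied. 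You are right that the last step is the only non-routine point --- a naive attempt to realize $(\ZZ/\eta\ZZ)^l$ directly as the span of suitable multiples of the $g_i$ runs into trouble when the orders of the $g_i$ are divisible by higher powers of $\eta$, so passing through the quotient and then dualizing is the clean way to finish. This is also consistent in spirit with how the paper uses the result, namely as a constraint on the tail of the invariant-factor sequence via \cite[Lemma~4.4]{DBS}.
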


\begin{proof}[Proof of Theorem~\ref{mainthm}]
The only Adinkra with parameter $N=2$ is the $4$-cycle with 1 (or 3) negative edges, in which the theorem can be easily verified.

For $N\geq 3$, fix a color of the Adinkra.
By switching if necessary, we may assume the $v/2$ edges of that color are all positive.
Applying the calculation in Proposition~\ref{prop:pairing} to Theorem~\ref{thm:tail_heavy} with $\eta=N/\gcd(2,N)$, we know that $\K(\A)$ contains a subgroup isomorphic to $(\ZZ/\frac{N}{\gcd(2,N)}\ZZ)^{v/2}$.

Hence, by an elementary fact on invariant factors and subgroups (for reference, see \cite[Lemma~4.4]{DBS}), for every $i>v/2$, $\frac{N}{\gcd(2,N)}\mid f_i$.
Combining that $(N-1)\mid f_i, \forall i>v/2$, we have $\frac{N^2-N}{\gcd(2,N)}\mid f_i$, which in turn forces each $f_i$ with $i\leq v/2$ to be either $1$ or $2$ by the second half of Theorem~\ref{thm:Adinkra_basic}.
The number of even invariant factors (necessarily 2) in the first half is $m$, so the number of invariant factors in the second half that are equal to $\frac{N^2-N}{2}$ is also $m$, and the remaining non-trivial invariant factors must be $N^2-N$, the claimed structure of $\K(\A)$ follows.
\end{proof}

\section{Non-generic Adinkras}

Corollary~\ref{maincoro} is straightforward from the main theorem.

{\begin{proof}[Proof of Corollary~\ref{maincoro}]
From the aforementioned observation, the signature does not affect the $2$-rank of the Laplacian, which determines the critical group.
\end{proof}}

We recall some background of the corollary: while Theorem~\ref{thm:underlying_G} classifies which graphs admit an Adinkra structure, for a given such graph, there can be multiple (even up to natural notions of isomorphism) Adinkra structures.
In particular, while $\K(\A)$ is invariant under vertex switchings and {\em color-preserving graph automorphisms}, there can exist different Adinkra signatures on a graph $G=Q_t/\Ccal$ that are not equivalent by these two operations, hence the original conjecture is not a vacuous question.

Indeed, $G$ admits inequivalent signatures if and only if $\Ccal$ contains the all one codeword ${\bf 1}\in\FF_2^t$ \cite{DIL}, and in the language of Cayley graphs, if and only if the sum of generators is zero.
These Cayley graphs are {\em non-generic} in the sense of \cite{GMMY}, and they are precisely the Cayley graphs on $\FF_2^n$ whose $2$-rank drops below $2^{n-1}$, i.e., $m>0$ in the main theorem.
Therefore, the classes of Adinkras (or the underlying graphs thereof) that behave non-trivially in terms of signatures and critical groups turn out to be the same.

We use this opportunity to mention one more {\em possible} instance that the very class of Adinkras is special.
As referred to in the introduction, Theorem~\ref{thm:IKKY} was proven by deforming the critical group into a $\ZZ[x]$-module.
This could be done by considering the following matrix $\hat{L}(\A)$ over $\ZZ[x]$: fix an arbitrary color $c$,  replace the diagonal entries of $L(\A)$ by $x+(N-1)$, and replace the off-diagonal entries $\pm 1$ corresponding to edges of color $c$ by $\pm x$.
Since $\ZZ[x]$ is not a PID, it is not obvious that the SNF of $\hat{L}(\A)$ exists, and it was conjectured in \cite{IKKY} that the SNF exists if and only if $\K(\A)\cong(\ZZ/(N^2-N)\ZZ)^{v/2}$, which we now know the latter is true if and only if $\A$ is generic.
It was only stated in \cite{IKKY} as a fact without proof that the forward direction is true, so we fill in the argument below:

\begin{proposition}
When $\A$ is non-generic, the SNF of $\hat{L}(\A)$ does not exist.
\end{proposition}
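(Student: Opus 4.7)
My plan is to specialize $\hat L(\A)$ at $x = 1$ and $x = 0$ and extract contradictions from the implied structure of the $\ZZ[x]$-invariant factors. Note $\hat L(\A) = L(\A') + xL_c$, where $\A'$ is $\A$ with the color-$c$ edges removed and $L_c$ is the signed Laplacian of the color-$c$ matching. In the non-generic case (${\bf 1} \in \Ccal$, which forces $4 \mid N$), $\A'$ remains a connected $(N-1)$-colored Adinkra on $v$ vertices since $e_c \equiv \sum_{c' \ne c} e_{c'} \pmod{\Ccal}$. Moreover $\A'$ is generic, because an all-ones codeword of length $N-1$ in its code would lift to a codeword of $\Ccal$ of weight $N-1 \equiv 3 \pmod 4$, violating doubly-evenness. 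Hence Theorem~\ref{mainthm} applied to $\A'$ gives invariant factors of $L(\A')$ equal to $v/2$ ones and $v/2$ copies of $(N-1)(N-2)$.

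The next step is to compute $\det \hat L(\A)$. From the relations $L(\A)^2 = 2NL(\A) - N(N-1)I$ and $L(\A')^2 = 2(N-1)L(\A') - (N-1)(N-2)I$ (both given by Theorem~\ref{thm:Adinkra_basic}) together with $L_c^2 = 2L_c$ (as $L_c = I - P_c$ with $P_c^2 = I$), a short computation yields the quadratic identity
\[
\hat L(\A)^2 = 2((N-1)+x)\,\hat L(\A) - (N-1)(N-2+2x)\,I.
\]
Combined with $\operatorname{tr}\hat L(\A) = v((N-1)+x)$, this forces each quadratic root to occur with eigenvalue multiplicity $v/2$, so $\det \hat L(\A) = ((N-1)(N-2+2x))^{v/2}$. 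For $4 \mid N$ this factors in $\ZZ[x]$ as $(N-1)^{v/2} \cdot 2^{v/2} \cdot (x+(N-2)/2)^{v/2}$, with $x+(N-2)/2$ as its only non-constant irreducible factor.

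Assuming the SNF exists with invariant factors $d_1 \mid d_2 \mid \cdots \mid d_v$, each $d_i = \alpha_i \,(x+(N-2)/2)^{c_i}$ for some $\alpha_i \in \ZZ_{>0}$ and $c_i \ge 0$, with $d_i(1)$ and $d_i(0)$ being invariant factors of $L(\A)$ and $L(\A')$ respectively. For $N \ge 8$, $(N-2)/2$ is odd and coprime to $2(N-1)$, so the constraint $d_i(0) \in \{1,(N-1)(N-2)\}$ forces $d_i \in \{1,(N-1)(2x+N-2)\}$ and hence $d_i(1) \in \{1, N(N-1)\}$; however the non-generic hypothesis gives $m > 0$, so $L(\A)$ has invariant factor $2$ by Theorem~\ref{mainthm}, contradicting this restriction. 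For $N = 4$ (where $(N-2)/2 = 1$) an analogous enumeration yields $d_i \in \{1, x+1, 6, 6(x+1)\}$; then the $m$ invariant factors equal to $2$ in $L(\A)$ force some $d_i = x+1$, and the $m$ invariant factors equal to $6$ force some $d_j = 6$, but $x+1$ and $6$ are coprime in $\ZZ[x]$, so they cannot coexist in a divisibility chain --- contradiction.

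The main obstacle will be establishing the quadratic identity for $\hat L(\A)$ and the resulting explicit factorization of its determinant, since $L(\A')$ and $L_c$ do not commute; the cross-term $L(\A')L_c + L_cL(\A')$ has to be recovered via $L(\A) = L(\A') + L_c$. Once this is in hand, the rest of the argument reduces to elementary divisibility considerations in $\ZZ[x]$.
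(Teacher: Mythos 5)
Your proof is correct, but it reaches the contradiction by a genuinely different mechanism than the paper's. The shared skeleton is: assume the SNF exists, factor $\det\hat L(\A)=\pm 2^{v/2}(N-1)^{v/2}(x+\tfrac{N-2}{2})^{v/2}$, and specialize at $x=1$ to collide with Theorem~\ref{mainthm}. But where the paper simply cites \cite[Corollary~29]{IKKY} for the determinant, you re-derive it via the identity $\hat L(\A)^2=2(N-1+x)\hat L(\A)-(N-1)(N-2+2x)I$, which does check out (the non-commuting cross-term $L(\A')L_c+L_cL(\A')$ is recovered correctly from the quadratic relations for $L(\A)$ and $L(\A')$, so the ``obstacle'' you flag at the end is not actually one). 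More substantively, the paper's key step is a pigeonhole on multiplicities: since $2$ and $x+\tfrac{N-2}{2}$ each occur exactly $v/2$ times in $\prod_i\hat f_i$, neither can divide $\hat f_i$ for $i\le v/2$ (it would then divide all $\hat f_j$, $j\ge i$, giving multiplicity $>v/2$), so the first half of the invariant factors are odd constants and $x=1$ gives the contradiction with $m>0$. You instead specialize at $x=0$, recognizing $\hat L(\A)(0)=L(\A')$ where $\A'$ is the $(N-1)$-colored Adinkra obtained by deleting color $c$, proving $\A'$ is connected and generic (both correctly, using ${\bf 1}\in\Ccal$ and double evenness, which also yields $4\mid N$ --- a sharpening of the paper's claim that $N$ is merely even), and applying Theorem~\ref{mainthm} to $\A'$ to pin down every $d_i$ exactly; the $N=4$ subcase via incompatibility of $x+1$ and $6$ in a divisibility chain is also sound. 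The trade-off: the paper's route is shorter and needs nothing about $\A'$, while yours is self-contained on the determinant and extracts strictly more information about the putative SNF, at the cost of a second invocation of the main theorem and the genericity analysis of the shortened code.
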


\begin{proof}
By \cite[Corollary 29]{IKKY}, the determinant of $\hat{L}(\A)$ is equal to
$$(2(N-1)x+(N-1)(N-2))^{v/2}.$$
Since $\A$ is non-generic, $N$ must be an even number: e.g., if $\gcd(2,N)=1$, then $f_i=(N-1)N$ for all $i>v/2$ from the proof of the main theorem.

Suppose the SNF of $\hat{L}(\A)$ exists, and the invariant factors are $\hat{f}_1\mid\ldots\mid\hat{f}_v$.
Then whenever $2$ or $x+\frac{N-2}{2}$ divides $\hat{f}_i$ for some $i\leq v/2$, the same can be said for $\hat{f}_j$ with $j\geq i$, a contradiction to the fact that $\prod_{i=1}^v \hat{f}_i = \pm 2^{v/2}(N-1)^{v/2}(x+\frac{N-2}{2})^{v/2}$, and that $\ZZ[x]$ is a UFD with $2,x+\frac{N-2}{2}$ not dividing $N-1$.
Hence, $\hat{f}_i\mid (N-1)$ for $i\leq v/2$.

The SNF of $L(\A)$ can be obtained from the SNF of $\hat{L}(\A)$ by setting $x=1$ (see, for example, \cite[Lemma 27]{IKKY}), so the first half of the invariant factors of $L(\A)$ must be all odd, a contradiction.
\end{proof}

\section{Concluding Remarks}

In some sense, Theorem~\ref{mainthm} is the best possible result concerning $\K(\A)$ unless one is able to make progress on the $2$-rank of Cayley graphs, which is a non-trivial problem arguably orthogonal to the combinatorics of Adinkras\footnote{On the optimistic side, the author does not rule out the possibility of using Adinkras to approach problems in Cayley graphs.}.
However, as Adinkras are related to multiple mathematical topics \cite{Iga_Clifford, zhang}, putting our result in the context of those topics would be fruitful.

When studying the critical groups of Cayley graphs or many other graphs from algebra, the results and/or their proofs are often {\em directly} related to the algebraic origin of those graphs.
On the contrary, the works on critical groups of Adinkras so far mostly use the combinatorial axioms to develop alternative algebraic setups for the problem.
So it is interesting to interpret the result here directly using supersymmetry algebras.
For example, do supersymmetry algebras corresponding to non-generic Adinkras also special in some way?

On the geometric side, every Adinkra can be canonically embedded to a Riemann surface in the sense of Grothendieck's dessins d'enfants, and some properties of those Riemann surfaces are related to the properties of Adinkras in a deep manner \cite{DIKLM1,DIKLM2}.
Meanwhile, the theory of critical groups is a discrete/tropical analogue of the theory of divisors and Jacobians of algebraic curves \cite{MB_Spec}.
Comparing the two worlds via the embedding is another direction worth looking into.
For example, elements ${\bf e}_u-{\bf e}_v$'s considered in the proof of Proposition~\ref{prop:pairing} are now divisors on the Riemann surface, does the monodromy pairing on $\K(\A)$ relate to any notion there?

Finally, one can also ask if there are other families of graphs or signed graphs whose critical groups can be approached in a similar fashion.
More generally, can the structure of every critical group be certified by demonstrating an ``orthogonal basis'' with respect to $\langle\cdot,\cdot\rangle$?
If not, how much information can the method provide?

\section*{Acknowledgements}
The author was supported by the Trond Mohn Foundation project ``Algebraic and Topological Cycles in Complex and Tropical Geometries'' at the University of Oslo.
He also thanks Kevin Iga for reading an early draft.
\vspace{-2mm}

\bibliographystyle{plain}

\bibliography{SRG}

\end{document}